\newcommand{\bbV}{{\mathcal V}}
\newcommand{\bbI}{{\mathcal I}}
\newcommand{\remove}[1]{}
\newtheorem{conj}{Conjecture}[section]
\newtheorem{fact}{Fact}[section]
\newtheorem{prop}{Proposition}[section]
\g@addto@macro{\endabstract}{\@setabstract}
\newcommand{\authorfootnotes}{\renewcommand\thefootnote{\@fnsymbol\c@footnote}}%
\begin{document}
\begin{center}
\Large
Almost Empty Monochromatic Triangles in Planar Point Sets \par \bigskip

 \normalsize
Deepan Basu\textsuperscript{1}, Kinjal Basu\textsuperscript{2}, Bhaswar B. Bhattacharya\textsuperscript{2} and Sandip Das\textsuperscript{3} \par \bigskip

\textsuperscript{1}\small{Max-Planck Institut f\" ur Mathematik in den Naturwissenschaften, Leipzig, Germany,\\ {\tt Deepan.Basu@mis.mpg.de} \par
\textsuperscript{2}Department of Statistics, Stanford University, California, USA\\ {\tt \{kinjal, bhaswar\}@stanford.edu} \par
\textsuperscript{3}Advanced Computing and Microelectronic Unit, Indian Statistical Institute, Kolkata\\
 {\tt sandipdas@isical.ac.in}  \par }\bigskip

\end{center}

\begin{abstract}
For positive integers $c, s \geq 1$, let $M_3(c, s)$ be the least integer such that any set of at least
$M_3(c, s)$ points in the plane, no three on a line and colored with $c$ colors, contains a monochromatic triangle with at most $s$ interior points. The case $s=0$, which corresponds to empty monochromatic triangles, has been studied extensively over the last few years. In particular, it is known that $M_3(1, 0)=3$, $M_3(2, 0)=9$ and $M_3(c, 0)=\infty$, for $c\geq 3$. In this paper we extend these results when $c \geq 2$ and $s \geq 1$. We prove that the least integer $\lambda_3(c)$ such that $M_3(c, \lambda_3(c))< \infty$ satisfies:
$$\left\lfloor\frac{c-1}{2}\right\rfloor \leq\lambda_3(c)\leq c-2,$$
where $c \geq 2$. Moreover, the exact values of $M_3(c, s)$ are determined for small values of $c$ and $s$. We also conjecture that $\lambda_3(4)=1$, and verify it for sufficiently large Horton sets. \\

\noindent\textbf{Keywords: } Empty polygons, Colored point sets, Discrete geometry, Erd\H os-Szekeres theorem.
\end{abstract}

\section{Introduction}
\label{sec:intro}

The Erd\H os-Szekeres theorem \cite{erdos} states that for every positive integer {\it m}, there exists a smallest integer $ES(m)$, such that any set of at least $ES(m)$ points in the plane, no three on a line, contains $m$ points which lie on the vertices of a convex polygon. The best known bounds on $ES(m)$ are:
\begin{equation}
2^{m-2} +1\leq ES(m) \leq  {{2m-5}\choose {m-3}}+1.
\label{eq:es_bound}
\end{equation}
The lower bound is due to Erd\H os \cite{erdosz} and the upper bound is due to T\'oth and Valtr \cite{tothvaltr}.
It is known that $ES(4)=5$ and $ES(5)=9$ \cite{kalb}. Following a long computer search, Szekeres and Peters
\cite{szekeres} proved that $ES(6)=17$. The exact value of $ES(m)$ is unknown for all $m> 6$, but Erd\H os conjectured that the lower bound in (\ref{eq:es_bound}) is, in fact, sharp.

In 1978 Erd\H os \cite{erdosempty} asked whether for every positive integer $k$,
there exists a smallest integer $H(k)$, such that any set of at least
$H(k)$ points in the plane, no three on a line, contains $k$ points which lie on the vertices of a
convex polygon whose interior contains no points of the set. Such a subset is called an {\it empty
convex $k$-gon} or a {\it convex k-hole}. Esther Klein showed that {$ H(4)=5$} and Harborth \cite{harborth}
proved that {$ H(5)=10$}. Horton \cite{horton} showed
that it is possible to construct an arbitrarily large set of points
without a 7-hole, thereby proving that {$ H(k) $} does not exist
for {$ k \geq 7$}. After a long wait, the existence of $ H(6)$ was proved
by Gerken \cite{gerken} and independently by Nicol\'as
\cite{nicolas}. Later, Valtr \cite{valtrhexagon} gave a simpler version of Gerken's proof. Recently, Koshelev \cite{koshelev} has proved that $H(6) \leq \max\{ES(8), 400\}\leq 463$ (the full version of the paper appeared in Russian \cite{koshelev_russian}). The counting version of this problem, that is, the minimum number $N_n(k)$ of
$k$-holes in a set of $n$ points in general position in the plane, is also widely studied \cite{barany_valtr_empty_convex_polygons,dumitrescu} (for the best known bounds refer to the recent papers by Aichholzer et al. \cite{aichholzerI,aichholzerII} and Valtr \cite{valtr_count} and the references therein).

The notion of {\it almost empty} convex polygons, that is, convex polygons with few interior points, was introduced by Nyklov\'a \cite{nyklova}. For integers $k\geq 3$ and $s\geq 0$, let $H(k, s)$ be the minimum positive integer such that any set $S$ of at least $H(k, s)$ points in the plane, no three on a line, contains a subset $Z$ of cardinality $k$ whose elements are on the vertices of a convex $k$-gon and there are at most $s$ points of $S$ in the interior of the convex hull of $Z$. It is clear that $ES(k) \leq H(k, s) \leq H(k, 0)= H(k)$. Nyklov\'a \cite{nyklova} showed that $H(6, 6) = ES(6)=17$ and $H(6, 5) = 19$. It was also shown that $H(k, s)$ does not exist for every $s \leq 2^{(k+6)/4}-k-3$  \cite{nyklova}. Recently, Koshelev \cite{koshelev_almost_empty_hexagon} proved that $H(6, 1)\leq ES(7)$. Koshelev  \cite{koshelev_computer_solution,koshelev_almost_empty_hexagon} also showed that Nyklov\'a's proof is incorrect, and using a computer search based on the Szekeres-McKay-Peters algorithm \cite{szekeres} proved that $H(6, k)=17$, for all $2\leq k\leq 6$ and $H(6, 1)=18$.\footnote{Koshelev's papers were originally published in Russian. Here we refer to their English translations \cite{koshelev_computer_solution,koshelev_almost_empty_hexagon}.}

Colored variants of the Erd\H os-Szekeres problem were considered by Devillers et al. \cite{erdosszekereschromatic}.
In such problems, the set of points is partitioned into $r\geq 2$ color classes, and a convex polygon is said to be monochromatic if all its vertices belong to the same color class. It is easy to see that any 2-colored point set of size 10 has an empty monochromatic triangle. Grima et al. \cite{disjoint_monochromatic_triangles} showed that 9 points are necessary and sufficient for a bi-colored point set to have a monochromatic empty triangle. Aichholzer et al. \cite{aichholzer_triangle_number} proved that any set of $n$ bi-colored points in the plane contains $\mathrm\Omega(n^{5/4})$ empty monochromatic triangles, which was later improved to $\mathrm\Omega(n^{4/3})$ by Pach and T\'oth \cite{pach_toth_monochromatic_empty_triangles}. Devillers et al. \cite{erdosszekereschromatic} also constructed a specific coloring for the Horton sets to prove that there exists arbitrarily large 3-colored point sets with no monochromatic empty triangles. Using this they showed that there exists arbitrarily large 2-colored point sets with no monochromatic 5-hole.

It was conjectured by Devillers et al.\cite{erdosszekereschromatic} that any sufficiently large set of bi-colored points in the plane, no three on a line, contains a monochromatic 4-hole. However, in spite of substantial attempts over the last few years, the problem remains open.
%Note that every uncolored point set that admits a convex 7-hole always contain a monochromatic 4-hole for any bi-coloration. 
Devillers et al.\cite{erdosszekereschromatic} showed that for $n\geq 64$ any bi-chromatic Horton set contains a monochromatic 4-hole. The best known lower bound is a two-colored set of 36 points that contains no monochromatic 4-hole by Huemer and Seara \cite{huemer_geombinatorics}.

As no real progress for the monochromatic empty 4-hole problem was being made, a weaker version arose by relaxing the convexity condition \cite{pach_non_convex}. Aichholzer et al. \cite{aichholzer_siam_dm} showed that this relaxed conjecture is indeed true. They proved that if the cardinality of the bi-chromatic point set $S$ is at least 5044, there always exists an empty monochromatic 4-gon spanned by $S$, that need not be convex. Using observations on vertex degree parity constraints for triangulations of $S$, this bound was lowered to 2760 points by Aichholzer et al. \cite{aiccholzer_parity_wads}.

%Grima et al. \cite{disjoint_monochromatic_triangles} studied the problem of partitioning bi-colored point sets
%into {\it almost monochromatic} parts.

To the best of our knowledge, the analogous version of the almost empty convex polygon problem for colored point sets has never been considered. In this paper, we initiate the study of the combinatorial quantities associated with the existence of monochromatic empty polygons with few interior points in colored point sets.
For positive integers $c, s \geq 1$ and $r\geq 3$ define $M_r(c, s)$ to be the least integer such that any set of at least $M_r(c, s)$ points in the plane, no three on a line, and colored with $c$ colors contains a monochromatic convex polygon of $r$ vertices and with at most $s$ interior points. In this paper, we primarily concentrate on almost empty monochromatic triangles, that is, the case $r=3$. The case $s=0$, which corresponds to empty monochromatic triangles, has been studied extensively over the last few years. We already mentioned that $M_3(1, 0)=3$, $M_3(2, 0)=9$ \cite{disjoint_monochromatic_triangles} and $M_3(c, 0)=\infty$, for $c\geq 3$ \cite{erdosszekereschromatic}. In this paper we extend these results when $c \geq 3$ and $s \geq 1$. We begin by showing that 

\begin{thm}
$M_3(3, 1)=13$. 
\label{th:31}
\end{thm}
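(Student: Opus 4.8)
The plan is to prove the two bounds $M_3(3,1) \le 13$ and $M_3(3,1) \ge 13$ separately. For the lower bound I would exhibit a set of $12$ points, no three collinear, colored with $3$ colors (four points of each color), in which every monochromatic triangle contains at least two points of the set in its interior. The natural template is to let each color class span a convex quadrilateral and to interleave the three quadrilaterals so that their interiors overlap heavily: since a convex quadrilateral $ABCD$ is split by each diagonal into two triangles, forcing all four triangles $ABC$, $BCD$, $CDA$, $DAB$ to carry at least two interior points requires the points of the \emph{other} two colors to sit in the central region common to several quadrilaterals at once. The work here is to pin down explicit coordinates and check the $12$ monochromatic triangles, which I expect to streamline by a $3$-fold rotationally symmetric arrangement so that one verification handles all three colors.

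For the upper bound, take any $13$ points in general position colored with $3$ colors. By pigeonhole some color class $C$ has $|C| = k \ge \lceil 13/3 \rceil = 5$. The main engine is a triangulation count: fix a triangulation of $C$ using all $k$ of its points; it has exactly $2k - 2 - h$ triangles, where $h$ is the number of points of $C$ on its convex hull, and each such triangle is monochromatic and contains no further point of $C$. Hence its only possible interior points are the at most $13 - k$ points of the other colors lying inside $\mathrm{conv}(C)$. If every triangle carried at least two such points we would need at least $2(2k-2-h)$ foreign points inside $\mathrm{conv}(C)$, so whenever
\[
13 - k < 2\,(2k - 2 - h)
\]
some triangle of the triangulation has at most one interior point and we are done. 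This inequality holds for all $k \ge 6$, and also for $k = 5$ when $h = 3$; the only surviving configurations therefore have a largest color class of exactly five points in convex position ($h = 5$), or with a single point inside its hull ($h = 4$), enclosing most of the remaining eight points.

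To finish these residual cases I would pass to the enclosed points. When $C$ is a convex pentagon its five vertices are extreme, so no vertex of $C$ lies inside any triangle spanned by the enclosed points; the enclosed set is two-colored, and a monochromatic triangle among them can only have interior points drawn from that two-colored set. The problem thus reduces to a bound of the form $M_3(2,1) \le m$ for the relevant number $m$ of enclosed points, a small-parameter value I would establish (or quote from the small-case computations) beforehand; the $h = 4$ case is handled similarly after first absorbing the interior point of $C$ into a fresh triangulation on all five points. As an independent lever for the borderline counts I would also use $M_3(2,0) = 9$: the two largest color classes together have at least $9$ points and hence span a triangle that is empty with respect to both of their colors, whose interior can only contain points of the third (smallest) class, of size at most $4$.

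I expect the main obstacle to be exactly this residual analysis: when the dominant color forms a convex pentagon (or near-pentagon) tightly enclosing the other eight points, the crude triangulation pigeonhole falls one point short, and one must argue through the two-colored inner configuration while carefully tracking which points escape $\mathrm{conv}(C)$. Matching this with a genuinely tight $12$-point construction for the lower bound—so that the value is exactly $13$ rather than merely bounded above by it—will be the second delicate point.
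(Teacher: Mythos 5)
Your upper-bound skeleton is the same as the paper's: pigeonhole to a largest color class $C$ of size $k\ge 5$, count triangles in a triangulation of $C$, and observe that everything except $k=5$ with $h\in\{4,5\}$ falls to the inequality $13-k<2(2k-2-h)$. The convex-pentagon case ($h=5$) also closes the way you indicate, via $M_3(2,1)=6$ applied to the at least six two-colored points trapped inside the pentagon (no point of $C$ can lie in a triangle spanned by them, since all five are extreme). But the $k=5$, $h=4$ case is a genuine gap, and neither of the tools you name is strong enough to close it. There the surviving configuration has all eight foreign points \emph{plus one point of $C$} inside $\mathrm{conv}(C)$. Applying $M_3(2,1)\le 6$ to the eight inner bichromatic points yields a monochromatic triangle with at most one interior point \emph{of those eight}, but that triangle may additionally contain the interior point of $C$, giving two interior points of $S$ --- the count falls exactly one short, as you yourself anticipate. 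Your other lever, $M_3(2,0)=9$ applied to the union of the two largest classes, produces an empty monochromatic triangle whose interior is only guaranteed to avoid those two colors; it can still contain up to $|G|\le 4$ points of the third class, which is far from ``at most one.'' What is actually needed, and what the paper isolates as a separate lemma, is that an $8$-point $2$-colored set with \emph{unequal} color classes always contains an \emph{empty} monochromatic triangle; that empty triangle then picks up at most the single interior point of $C$. The balanced split $|B|=|G|=4$ requires yet another step (the paper analyzes $CH(B)$, splitting on whether its hull has $3$ or $4$ vertices and locating a green triangle containing at most the one red interior point). None of this machinery appears in your proposal beyond the acknowledgment that the case is hard.

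The lower bound is likewise only a plan: you describe a rotationally symmetric three-quadrilateral template but supply no coordinates and no verification that all twelve monochromatic triangles contain two interior points, whereas the exact value $13$ requires an actual $12$-point witness (the paper exhibits one in a figure). As written, the proposal establishes neither inequality completely; the decisive missing ingredient on the upper-bound side is the unequal-split $8$-point lemma and the $4{+}4$ sub-case analysis.
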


%\begin{thm}
%$M_3(c, c-2)\leq (c+1)^2$, for $c \geq 2$.
%\label{th:c+1}
%\end{thm}

Using this result and induction on $c$, and a Horton set construction, we obtain the main result of the paper:

\begin{thm}
For $c\geq 2$, the least integer $\lambda_3(c)$ such that $M_3(c, \lambda_3(c))< \infty$ satisfies:
$$\left\lfloor\frac{c-1}{2}\right\rfloor\leq\lambda_3(c)\leq c-2.$$
%where $c \geq 2$ and $\lfloor c\rfloor_p$ is the largest prime number less than or equal to $c$ for $c \geq 3$ and $\lfloor 2\rfloor_p=1$
\label{th:lambda}
\end{thm}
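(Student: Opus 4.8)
The plan is to establish the two inequalities separately: the upper bound $M_3(c,c-2)<\infty$ by induction on the number of colors $c$, with Theorem~\ref{th:31} ($M_3(3,1)=13$) and the known value $M_3(2,0)=9$ as base cases, and the lower bound $\lambda_3(c)\ge\lfloor(c-1)/2\rfloor$ by exhibiting arbitrarily large $c$-colored point sets, built from Horton sets, in which every monochromatic triangle contains at least $\lfloor(c-1)/2\rfloor$ interior points.

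For the inductive step I assume $M:=M_3(c-1,c-3)<\infty$, take a $c$-colored set $S$ with $|S|=N$ large, and suppose for contradiction that no monochromatic triangle of $S$ has at most $c-2$ interior points. For a color $i$ write $S_i$ for its points, $m_i=|S_i|$, and call a triangle spanned by three points of $S_i$ an $i$-empty triangle if its interior contains no point of $S_i$. The key observation is that \emph{any $i$-empty triangle whose interior contains at least $M$ points of $S$ would finish the proof}: those interior points avoid color $i$, hence form a $(c-1)$-colored set, and the induction hypothesis produces inside it a monochromatic triangle with at most $c-3\le c-2$ interior points; as the enclosing triangle is $i$-empty, no point of color $i$ is added and the count is preserved, contradicting our assumption. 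Thus every $i$-empty triangle carries at most $M-1$ interior points, while being monochromatic it must, by the contradiction hypothesis, carry at least $c-1$.

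To exploit the lower bound I triangulate $\mathrm{conv}(S_i)$ using the points of $S_i$, obtaining $t_i=2m_i-2-h_i$ triangles (with $h_i$ the size of the color-$i$ hull), each $i$-empty and hence carrying at least $c-1$ interior points. Since these interiors are disjoint and contain only points of $S\setminus S_i$, we get $(c-1)(2m_i-2-h_i)\le N-m_i$ for every $i$; summing over the $c$ colors and using $\sum_i m_i=N$ yields $\sum_i h_i\ge N-2c$, so at most $2c$ points of $S$ lie in the interior of their own color hull. In other words, all but a bounded number of points are extreme within their color class, and the main obstacle is to close this near-extremal case. Crude averaging over a fan triangulation of the majority color only produces a monochromatic triangle with at most $c-1$ interior points, one too many; the equality analysis shows that avoiding the bound $c-2$ forces every color class into convex position with its foreign points split into blocks of exactly $c-1$, a rigid configuration that I expect to rule out for large $N$ by a parity/extremal argument, the exact base value $M_3(3,1)=13$ supplying the single unit of slack that defeats this persistent off-by-one.

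For the lower bound I use the self-similar structure of the Horton set $H_k$ of $2^k$ points, which splits into an ``even'' and an ``odd'' Horton subset, one lying high above and the other deep below the other, so that a triangle spanned inside one part and straddling the other encloses many points. I color $H_k$ recursively, distributing the $c$ colors across the splitting levels so that each color class is itself a sparse, deeply interleaved subfamily. The Horton high/low lemmas then force any monochromatic triangle to span several consecutive levels, each crossed level contributing interior points; careful bookkeeping should yield at least $\lfloor(c-1)/2\rfloor$ of them, the halving arising from the binary even/odd division. The main obstacle here is the precise, level-by-level count of forced interior points for an arbitrary monochromatic triangle, for which I would adapt the ``deep below / high above'' estimates underlying the $s=0$, $c=3$ construction of Devillers et al.
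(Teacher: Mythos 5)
Your upper-bound argument has a genuine gap that your own text acknowledges. The counting step is correct as far as it goes: triangulating each color class and using the hypothesis that every monochromatic triangle has at least $c-1$ interior points does yield $\sum_i h_i \ge N-2c$, so almost every point is extreme in its own color class. But this near-extremal configuration is \emph{not} self-contradictory --- if every color class is in convex position, the same count gives only $m_1 \le (N+2(c-1))/c$ for the largest class, which is perfectly consistent with $m_1 \ge \lceil N/c\rceil$ --- so no ``parity/extremal argument for large $N$'' is waiting to finish the job; a new idea is required to kill the off-by-one. The paper closes it with two moves you are missing. First, it proves an auxiliary bound on the weaker quantity, $M_3(c,c-1)\le\max\{c^2+1,6\}$ (Lemma \ref{lm:2}), by a separate induction. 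Second, in the main induction it applies the hypothesis not inside a single monochromatic-empty triangle but inside $CH(S_1)$ for a largest color class $S_1$ (of size exactly $c+2$ once the easy cases are dispatched), splitting on the hull size: when $|\bbV(CH(S_1))|=c+2$ the interior of $CH(S_1)$ contains no point of color $1$, so the weaker bound $M_3(c-1,c-2)$ suffices; when $|\bbV(CH(S_1))|=c+1$ the interior contains exactly one point of color $1$, so a triangle with at most $c-3$ interior points among the other colors picks up at most one extra point, landing exactly on $c-2$. This hull-vertex bookkeeping is the mechanism that converts ``at most $c-1$'' into ``at most $c-2$'', and nothing in your proposal replaces it.

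The lower bound is also incomplete. Your recursive level-based coloring is not the paper's construction and is too vague to verify. The paper colors the points cyclically in $x$-order with $c=2q+1$ colors; the essential point is that an odd period survives the even/odd split, so $H^+$ and $H^-$ are again cyclically colored with all $c$ colors. Two same-colored points of a monochromatic triangle lying in, say, $H^-$ then have at least $2q$ points of $H^-$ between them; descending the recursion until these two points fall into different halves of a sub-Horton set, Proposition \ref{lm6} forces at least $q$ of the intermediate points above the connecting segment, and these are trapped inside any monochromatic triangle completed by a third vertex from $H^+$. Your sketch contains neither the parity-of-$c$ trick that makes the coloring self-reproducing under the split nor the quantitative count of forced interior points, so as written it does not establish $\lambda_3(c)\ge\left\lfloor\frac{c-1}{2}\right\rfloor$.
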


To prove the lower bound in Theorem \ref{th:lambda} we use colored Horton sets. The bounds obtained in this way are equal for $c\in \{2, 3\}$.  For $c=4$, the first value for which the bounds in Theorem \ref{th:lambda} do not match, we show that

\begin{thm}
Any Horton set with at least $26$ points colored arbitrarily with $4$ colors contains a monochromatic triangle with at most $1$ interior point.
\label{th:41_restriction}
\end{thm}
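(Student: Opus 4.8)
The plan is to exploit the recursive, vertically layered structure of a Horton set together with pigeonhole. Write the points in increasing order of $x$-coordinate and recall that a Horton set $H$ of size $n$ splits into its lower half $H_0$ and upper half $H_1$ (the points of even and odd index), each again a Horton set, with $H_1$ lying \emph{high above} $H_0$: every line through two points of $H_0$ passes below all of $H_1$, and every line through two points of $H_1$ passes above all of $H_0$. Iterating this splitting gives a binary recursion tree whose nodes are sub-Horton sets. First I would record two locality facts. (i) \emph{Strip locality}: any triangle spanned by three of $k$ consecutive points (in $x$-order) can contain in its interior only the remaining $k-3$ of those points, since every other point of $H$ lies outside the vertical slab they occupy. (ii) \emph{Intra-half containment}: if the three vertices of a triangle all lie in one half, say $H_0$, then no point of the sibling half $H_1$ lies in its interior, because $H_1$ is above every line through a pair of $H_0$. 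Applying (ii) at every level up the tree shows that a triangle whose vertices lie in a common node $v$ has all of its $H$-interior points inside $v$; combining this with (i) applied \emph{inside} $v$, a triangle on three points that are among four \emph{consecutive} points of $v$ has at most one interior point in all of $H$.

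The previous paragraph reduces Theorem \ref{th:41_restriction} to the purely combinatorial task of locating, for any $4$-colouring, some node $v$ of the recursion tree that contains three equally coloured points lying within four consecutive positions of $v$; equivalently, a colour that becomes \emph{locally dense} somewhere in the tree. The next step is a concentration argument. By pigeonhole a Horton set of $26$ points has a colour class $C$ of size at least $7$, and I would track $C$ down the tree, always descending into the half that contains the majority of the surviving points of $C$. By intra-half containment the interior count is preserved along this descent, while each step roughly halves the ambient node yet retains a definite fraction of $C$, so after a few levels I would argue that the retained points of $C$ are squeezed into a short block of consecutive positions of the current node, at which point the four-consecutive fact of the first paragraph yields a monochromatic triangle with at most one interior point. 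Where a colour refuses to concentrate on one side, I would instead isolate a consecutive (hence strip-local) window in which only three colours appear and invoke $M_3(3,1)=13$ from Theorem \ref{th:31}, transferring its almost-empty monochromatic triangle back to $H$ via strip locality.

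The hard part will be making the concentration bookkeeping tight enough to yield the specific constant $26$. The adversary's best colourings are precisely those that split every colour as evenly as possible across both halves at every level, so as to postpone local density for as long as possible; these are the near-extremal colourings underlying the lower bound $\lambda_3(4)\ge 1$, and excluding them requires quantifying how an even split in the tree nonetheless forces three like-coloured points into four consecutive slots once $26$ points are present. A secondary but genuine difficulty is the final geometric verification: when the three monochromatic points are consecutive within a half but the middle one is a ``peak'' rather than a ``valley'' of that half, one must check that at most one interleaved point of the opposite half actually falls inside the triangle, using the zigzag (low--high--low) profile of consecutive points guaranteed by the high-above relation. I expect the argument to finish by a short case analysis on the convex type of the relevant window (convex position versus one point lying inside the triangle of the other three), each case being routine once the two locality facts are in hand.
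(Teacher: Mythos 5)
Your two locality facts are sound and correspond to the machinery the paper actually uses (its Proposition \ref{lm6} plus the defining above/below property of $H^+$ and $H^-$), and the consequence you draw from them --- a triangle on three of four consecutive points of a node $v$ of the recursion tree has at most one interior point in all of $H$ --- is correct. The gap is in the combinatorial core that is supposed to produce such a configuration. Your primary target, three like-colored points among four consecutive positions of some node, is far too strong to force: a coloring that cycles the four colors along every node defeats it, and your majority-descent bookkeeping cannot rescue it, since starting from a color class of size $7$ in $26$ points you retain only $\geq 4$, then $\geq 2$, then $\geq 1$ points after successive halvings, with no mechanism making the survivors consecutive. Your fallback (a $13$-point window seeing only $3$ colors, fed into $M_3(3,1)=13$) only has room to operate at depth $\leq 1$ of the tree, so when both $H^+$ and $H^-$ carry all four colors and no color concentrates, the plan as written has nowhere to go. You flag this yourself as ``the hard part,'' but it is not a matter of tightening constants: the configuration you are hunting for simply need not exist.

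The paper escapes this by using a \emph{mixed} triangle rather than one confined to a single node: it takes two same-colored points $h_a <_x h_b$ of $H^-$ that are consecutive occurrences of their color, together with \emph{any} point $h_c$ of that color in $H^+$. By Proposition \ref{lm6} no point of $H^+$ lies inside $h_ah_bh_c$, and the only candidates for interior points are the points of $H^-$ strictly between $h_a$ and $h_b$ that rise above the segment $h_ah_b$; one more application of the Horton structure inside $H^-$ caps these at one when $b-a\leq 6$. So the pigeonhole only has to produce a close same-colored \emph{pair} inside the $13$-point set $H^-$ plus a single matching point anywhere in $H^+$ --- a much weaker demand than three points in four consecutive slots. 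If you want to salvage your approach, this is the idea to import; note also that for the genuinely evenly-spread colorings (e.g.\ $H^-$ colored cyclically $1,2,3,4,1,2,\dots$), descending one more level helps, since $(H^-)^-$ then sees only two colors and $M_3(2,1)=6$ from Proposition \ref{lm1}, combined with your intra-half containment applied twice, finishes immediately.
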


Due to the above theorem and several other observations we conjecture that the lower bound in Theorem \ref{th:lambda} is also tight for $c=4$, that is, every sufficiently large 4-colored point set contains a monochromatic triangle with at most 1 interior point.

\section{Preliminaries}

We begin by introducing the definitions and notation required for the remainder of
the paper. Let $S$ be a finite set of points in the plane in general position,
that is, no three on a line. Denote the convex hull of $S$ by $CH(S)$. The
boundary vertices of $CH(S)$, and the interior of $CH(S)$
are denoted by $\bbV(CH(S))$ and $\bbI (CH(S))$, respectively. Finally, for any finite set $Z\subseteq S$, $|Z|$ denotes the cardinality of $Z$.

A set of points $S$ in general position is said to be $c$-{\it colored} if $S$ can be partitioned into $c$ non-empty sets $S = S_1\bigsqcup S_2 \bigsqcup \cdots \bigsqcup S_c$, where each set $S_i$ will be referred to as the set of points of color $i$, and $\bigsqcup$ denotes the union of disjoint sets. A subset $T\subset S$ is called {\it monochromatic} if all its points have the same color.

A triangulation of a set of points $S$ in the plane in general position is a triangulation of the convex hull of $S$, with all points from $S$ being among the vertices of the triangulation. We now recall a few standard facts from triangulations of point-sets that we shall use repeatedly in the subsequent sections.

\begin{fact}
For any set $S$ of points in the plane in general position, the number of triangles in any triangulation of $S$ is $2|S|-|\bbV(CH(S))|-2$.
\label{fact1}
\end{fact}

Since for point sets in general position $|\bbV(CH(S))|\leq |S|$, the following is an immediate consequence of the above fact:

\begin{fact}
For any set $S$ of points in the plane in general position, the number of triangles in any triangulation of $S$ is at least $|S|-2$.
\label{fact2}
\end{fact}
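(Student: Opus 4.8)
The plan is to derive this directly from Fact \ref{fact1}, which I may assume. Fact \ref{fact1} asserts that every triangulation of $S$ has exactly $2|S|-|\bbV(CH(S))|-2$ triangles; note in particular that this number depends only on $|S|$ and on the number of convex-hull vertices, not on the particular triangulation chosen. So it suffices to bound the quantity $2|S|-|\bbV(CH(S))|-2$ from below by $|S|-2$, uniformly over all point sets $S$ in general position.

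The single inequality I need is $|\bbV(CH(S))|\leq |S|$. This holds because the boundary vertices of $CH(S)$ are themselves points of $S$: every vertex of the convex hull is an extreme point of the set and hence belongs to $S$, so $\bbV(CH(S))\subseteq S$ and the cardinality comparison follows. Substituting this into the exact count gives $2|S|-|\bbV(CH(S))|-2\geq 2|S|-|S|-2=|S|-2$, which is exactly the claimed lower bound, and it holds for every triangulation since the exact count does.

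There is essentially no obstacle here: the statement is an immediate corollary of Fact \ref{fact1} together with the trivial observation that the hull vertices form a subset of $S$. The only point worth flagging is that general position guarantees $\bbV(CH(S))$ consists of genuine vertices (no three collinear, so no hull point lies in the relative interior of an edge), which keeps the count in Fact \ref{fact1} exact and makes the subset inclusion clean.
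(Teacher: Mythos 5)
Your proof is correct and matches the paper exactly: the paper derives Fact \ref{fact2} from Fact \ref{fact1} via the same observation that $|\bbV(CH(S))|\leq |S|$, since the hull vertices are points of $S$. Your additional remark about general position is fine but not needed beyond what the paper already states.
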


Grima et al. \cite{disjoint_monochromatic_triangles} proved that every 2-colored 9-point set contains a monochromatic empty triangle. Moreover, there exists a set of 8 points with 4 points colored red and 4 points colored blue and no empty monochromatic triangle. In the following lemma we show that this is the only possible distribution of 2 colors among 8 points that can lead to a point set with no monochromatic empty triangles.

\begin{lem}\label{lm2}
Let $S=R\bigsqcup B$ be a 2-colored point set with color class $R$ and $B$, with $|S|=8$ and $|R|\neq |B|$. Then $S$ always contains an empty monochromatic triangle.
\end{lem}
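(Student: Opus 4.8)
The plan is to reduce to cases by the sizes of the two color classes and then exploit triangulations together with Facts \ref{fact1} and \ref{fact2}. Since the statement is symmetric in the two colors and $|R|\neq|B|$ with $|R|+|B|=8$, I may assume $|R|>|B|$, so that $|R|\in\{5,6,7\}$ and $|B|=8-|R|\geq 1$. The key observation is the following: fix any triangulation $\mathcal{T}$ of the red set $R$; by Fact \ref{fact2} it has at least $|R|-2$ triangles, each of which is empty of red points. A triangle of $\mathcal{T}$ can fail to be an empty monochromatic triangle only if some blue point lies in its interior, and a blue point lying outside $CH(R)$ lies in no triangle of $\mathcal{T}$. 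Writing $k$ for the number of blue points in $\bbI(CH(R))$, each such blue point lies in exactly one triangle of $\mathcal{T}$ (by general position), so at least $(|R|-2)-k$ triangles of $\mathcal{T}$ are empty monochromatic red triangles.

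For $|R|\in\{6,7\}$ this already finishes the argument: since $k\leq|B|=8-|R|$, the number of empty red triangles is at least $(|R|-2)-(8-|R|)=2|R|-10\geq 2$. The interesting case is $|R|=5$, $|B|=3$. If the red points are not in convex position then $|\bbV(CH(R))|\leq 4$, so by Fact \ref{fact1} the triangulation $\mathcal{T}$ has $2\cdot 5-|\bbV(CH(R))|-2\geq 4$ triangles, and since $k\leq 3$ at least one of them is an empty monochromatic red triangle. The remaining, and genuinely delicate, situation is when the five red points are in convex position: then $\mathcal{T}$ has only $3$ triangles, and if all three blue points happen to lie inside the red pentagon (so $k=3$) the counting bound degenerates to zero and gives nothing.

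I expect this convex-pentagon case to be the main obstacle, and the idea to get around it is to switch colors and look for a blue empty triangle instead. When $k=3$, all three blue points lie in $\bbI(CH(R))$, so the triangle they span is contained in the interior of $CH(R)$ and therefore contains no red vertex; as there are only three blue points in total, it contains no blue point either, so it is an empty monochromatic blue triangle and the proof is complete. (If instead $k\leq 2$ in the convex case, at most two of the three triangles of $\mathcal{T}$ are blocked, again leaving an empty red triangle.) The only subtlety to verify carefully is that a red hull vertex cannot lie in the interior of a triangle whose three vertices are interior to $CH(R)$, which is immediate since such a vertex is an extreme point of $CH(R)$.
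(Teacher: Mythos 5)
Your proposal is correct and follows essentially the same route as the paper: bound the number of triangles in a triangulation of $R$ via Facts \ref{fact1} and \ref{fact2}, compare with the number of blue points that can block them, and in the one degenerate case ($|R|=5$ in convex position with all three blue points inside) switch colors and use the blue triangle. The paper's proof is just a terser version of the same case analysis.
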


\begin{proof} Without loss of generality we assume that $|R|>|B|$, so $|R|\geq 5$.  If $|R|\ge 6$, then by Fact \ref{fact2} we get at least four triangles by triangulating $R$. Since $|B|\le 2$ we obtain an empty triangle in $R$. If $|R|=5$ and $|\mathcal V(CH(R))|\le 4$, then we get at least four triangles in $R$ by Fact \ref{fact1}. In this case we have $|B|=3$, so we again obtain a monochromatic empty triangle in $R$. If $|\mathcal V(CH(R))|=5$, then there is an empty triangle in $R$ unless $\bbI(CH(R))\cap S=B$. In that case, the 3 points in $B$ form an empty triangle. 
\end{proof}

\section{Monochromatic Triangles with At Most 1 Interior Point: Few Exact Values}
\label{sec:31}

In this section we obtain the exact values of $M_3(c, 1)$ for $c \leq 3$. These results will be used later on in the proofs of the main theorem, and are also interesting in their own right.

\begin{prop}
$M_3(2,1)=6$.
\label{lm1}
\end{prop}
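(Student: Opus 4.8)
The plan is to establish the two inequalities $M_3(2,1)\ge 6$ and $M_3(2,1)\le 6$ separately.

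For the lower bound I would exhibit a $5$-point $2$-colored set containing no monochromatic triangle with at most one interior point. Take three red points forming a triangle and place the two blue points in its interior. Since the blue class has only two points it spans no triangle, so the sole monochromatic triangle is the red one, and it contains exactly two interior points (both blue). Hence no monochromatic triangle has at most one interior point, which gives $M_3(2,1)\ge 6$.

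For the upper bound, consider any $6$-point set $S=R\sqcup B$ in general position. By pigeonhole the majority color, say $R$, satisfies $|R|\ge 3$, and the color distribution is one of $(5,1)$, $(4,2)$, or $(3,3)$. When $|R|\ge 4$ I would triangulate $R$: by Fact \ref{fact2} this yields at least $|R|-2\ge 2$ triangles, none of which contains a red point in its interior, so each has only blue interior points. If every such triangle contained at least two blue points, then $|B|=6-|R|\ge 2(|R|-2)$, i.e. $10\ge 3|R|$, which is false for $|R|\ge 4$. Hence some red triangle contains at most one blue point, giving a monochromatic triangle with at most one interior point.

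The remaining, and main, obstacle is the balanced case $|R|=|B|=3$, where each color spans a single triangle $T_R$ and $T_B$ and the triangulation count gives nothing. Here I would argue via the convex hull of $S$. The interior points of $T_R$ can only be blue (its three vertices are exactly the red points), and symmetrically the interior points of $T_B$ can only be red. If, for contradiction, both $T_R$ and $T_B$ contained at least two interior points, then at least two blue points lie strictly inside $T_R\subseteq CH(S)$ and at least two red points lie strictly inside $T_B\subseteq CH(S)$; since no three points are collinear, such strictly interior points cannot be vertices of $CH(S)$. This leaves at most one red and at most one blue vertex of $CH(S)$, so $|\bbV(CH(S))|\le 2$, contradicting that a planar point set in general position has at least three convex-hull vertices. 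Thus one of $T_R,T_B$ has at most one interior point, completing the upper bound and yielding $M_3(2,1)=6$.
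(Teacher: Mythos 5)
Your proof is correct and follows essentially the same route as the paper: the same $5$-point lower-bound construction, and for the upper bound a triangulation count when one color class has at least $4$ points, with a separate argument for the balanced $(3,3)$ case. Your convex-hull vertex-counting argument for the balanced case is in fact a cleaner and more explicit justification than the paper's terse assertion that the blue triangle has at most one interior point when $|\bbI(CH(R))\cap S|=2$.
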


\begin{proof} We observe that $M_3(2,1)>5$, by arbitrarily placing 2 blue-colored points inside a red-colored triangle.

%%%%%%%%%%%%%%%%%%%%%%%%%%%%%%%%%%%%%%%%%%%%%%%%%%%%%%%%%%%%%%%%%%%%%%%%%%%%%%%%%
%\begin{figure*}[!hbt]
%\centering
%\begin{minipage}[c]{1.0\textwidth}
%\centering
%\includegraphics[width=2.75in]
%{pic4.pdf}\\
%%{\small (a)}\\
%\end{minipage}%
%%\hspace{-0.25in}
%\caption{A set of 2-colored 5 points without any triangle with atmost one stain.}
%\label{fig1}%\vspace{-0.15in}
%\end{figure*}
%%%%%%%%%%%%%%%%%%%%%%%%%%%%%%%%%%%%%%%%%%%%%%%%%%%%%%%%%%%%%%%%%%%%%%%%%%%%%%%%%%%%%%%%%%%%%%%%%%%%%%%%%%%

Consider a set of 6 points colored arbitrarily with 2 colors, red and blue. Let $R$ and $B$ denote the set of red and blue points respectively. Without loss of generality assume that $|R|\ge |B|$. If $|R|\geq 4$, by Fact \ref{fact2} we can partition $R$ in at least two interior disjoint triangles. In this case, $|B|\leq 2$ and we have a monochromatic triangle in $R$ with at most one interior point. If $|R|=|B|=3$, then there is a monochromatic triangle with at most one interior point whenever $|\bbI(CH(R))\cap S|\ne 2$. Otherwise, $|\bbI(CH(R))\cap S|=2$ and the monochromatic triangle spanned by $B$ has at most one interior point. 
\end{proof}

%%%%%%%%%%%%%%%%%%%%%%%%%%%%%%%%%%%%%%%%%%%%%%%%%%%%%%%%%%%%%%%%%%%%%%%%%%%%%%%%

\begin{figure}[!hbt]
\centering
%\begin{minipage}[c]{1.0\textwidth}
\centering
 \includegraphics[width=0.45\columnwidth]
 {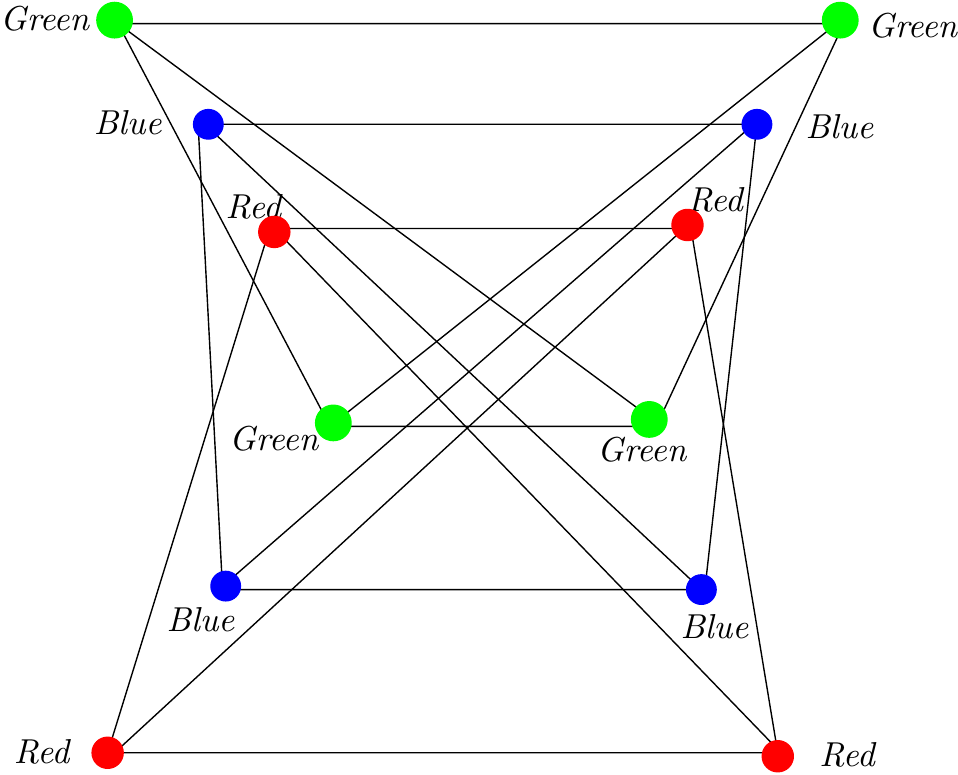}\\
%{\small (a)}\\
%\end{minipage}%
%\hspace{-0.25in}
\caption{A set of 3-colored 12 points that does not contain any monochromatic triangle with at most 1 interior point.}
\label{fig1}
\end{figure}
%%%%%%%%%%%%%%%%%%%%%%%%%%%%%%%%%%%%%%%%%%%%%%%%%%%%%%%%%%%%%%%%%%%%%%%%%%%%%%%%%%%%%%%%%%%%%%%%%%%%%%%%%%

\subsection{Proof of Theorem \ref{th:31}}

Figure \ref{fig1} shows that $M_3(3,1)\ge 13$. Therefore, to prove the theorem it suffices to show that $M_3 (3,1)\le 13$.

Let $S=R\bigsqcup B \bigsqcup G$ be a any set of 13 points colored by 3 colors. The three color classes $R, B, G$, will be referred to as red, blue, and green, respectively. Assume, without loss of generality,  that $|R|\ge |B| \ge |G|$. As $|R|+|B|+|G|=13$, we have $|R|\ge 5$. We now have the following cases:

\begin{description}
\item[$|R|\ge 6$] In this case any triangulation of $R$ has at least 4 red triangles. If all of these 4 triangles have more than one interior point then $|S\backslash R| \geq 4 \times 2 =8$, which contradicts the fact $|S\backslash R|=13-6=7$. Hence we get a red triangle with at most one interior point.

\item[$|R|=5$] We consider three cases based on the size of $\bbV(CH(R))$:

\begin{description}
\item[$|\bbV(CH(R))|=5$] Fact \ref{fact1} implies that that any triangulation of $R$ has three triangles. Assume that all these three triangles have more than one interior point. Then $Z=\bbI(CH(R))\cap S$ is a 2-colored point set with $|Z|\geq 3\times 2=6$, and we get a monochromatic empty triangle with at most one interior point by Proposition \ref{lm1}.

\item[$|\bbV(CH(R))|=4$] Fact \ref{fact1} implies that that any triangulation of $R$ has four triangles. As $|S\backslash R|=8$, we have a monochromatic red triangle with at most one interior point unless $\bbI(CH(R))\cap S=S\backslash R=B\bigsqcup G$. Now, If $|B|\ne |G|$, we then get a monochromatic empty triangle by Lemma \ref{lm2}. Note that this triangle is empty in $B\bigsqcup G$. But, it can contain at most 1 red point in the interior, since $|\bbI(CH(R))\cap R|=1$, and we are done. Finally, if $|B|=|G|=4$, then we look at $CH(B)$. If $|\bbV(CH(B))|=3$, we have 3 interior disjoint blue triangles by Fact \ref{fact1}. In this case $|S\backslash (\bbV(CH(R))\cup B)|= |\bbI(CH(R))\cap R|+|G|=5 < 3\times 2 =6$, and so we get a blue triangle with at most one interior point. Otherwise, $|\bbV(CH(B))|=4$. In this case, we are done unless $|\bbI(CH(B))\cap S| \geq 4$. Since $|\bbI(CH(R))\cap R|=1$, three of the 4 points in $\bbI(CH(B))\cap S$ must be green, and we get a green triangle with at most one red point inside.

\item[$|\bbV(CH(R))|=3$] In this case any triangulation of $R$ has 5 triangles. As $|S\backslash R| =8 < 5 \times 2 =10$, we always have a red triangle with at most one interior point. \vspace{0.05in}
\end{description}
\end{description}

This completes the proof of Theorem \ref{th:31}, where the exact value of $M_3(3,1)$ is obtained. In the following section we show that $M_3(5, 1)=\infty$. It remains open to determine whether $M_3(4, 1)$ is finite. More discussions about $M_3(4, 1)$ are in Section \ref{sec:41}.

\section{Proof of Theorem \ref{th:lambda}}
\label{sec:lambda}

In this section bounds on $\lambda_3(c)$ as stated in Theorem \ref{th:lambda} will be proved. Note that $\lambda_3(c)$ is the least integer such that $M_3(c, \lambda_3(c)) < \infty$. Proofs of the upper and lower bounds are presented separately in two sections. First it is shown that $M_3(c, c-2)\leq c^2+c+1$ for $c\geq 3$, that is, any $c$-colored point set with cardinality at least $c^2+c+1$ contains a monochromatic triangle with at most $c-2$ interior points. This proves that $\lambda_3(c) \leq c-2$. In the next section, constructing colorings for the Horton set, the lower bound on $\lambda_3(c)$ is proved.

\subsection{Proof of Upper Bound}

We begin with the following lemma where an upper bound on $M_3(c, c-1)$ is obtained. This bound will then be used to bound $M_3(c, c-2)$. 

\begin{lem}
For $c\geq 2$, $M_3(c,c-1) \leq \max\{c^2 + 1, 6\}$
\label{lm:2}
\end{lem}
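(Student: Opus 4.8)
The plan is to induct on the number of colors $c$, taking $M_3(2,1)=6$ (Proposition \ref{lm1}) as the base case, which matches $\max\{2^2+1,6\}=6$. So fix $c\ge 3$, assume $M_3(c-1,c-2)\le\max\{(c-1)^2+1,6\}$, and let $S$ be any set of $n:=|S|\ge c^2+1$ points colored with $c$ colors (note $c^2+1\ge 6$ for $c\ge 3$, so the maximum is $c^2+1$). First I would locate a large color class: since a partition of $n>c^2$ points into $c$ classes must contain a class of size $>c$, the largest color class $R$ satisfies $m:=|R|\ge\lceil n/c\rceil\ge c+1$. The whole argument hinges on triangulating $R$ and controlling, via Facts \ref{fact1} and \ref{fact2}, how the $n-m$ points of $S\setminus R$ distribute among the red triangles. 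I argue by contradiction, assuming $S$ has no monochromatic triangle with at most $c-1$ interior points; since in a triangulation of $R$ every red point is a vertex, no triangle has a red point in its interior, so every red triangle must contain at least $c$ points of $S\setminus R$ in its interior.

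Next I would split on whether $R$ is in convex position. Write $v=|\bbV(CH(R))|$, so by Fact \ref{fact1} any triangulation of $R$ has $t=2m-v-2$ interior-disjoint triangles. As each carries $\ge c$ interior points and these interiors are disjoint and contain only points of $S\setminus R$, counting gives $n-m\ge ct$. If $R$ is \emph{not} in convex position then $v\le m-1$, hence $t\ge m-1$, and combining with $m\ge n/c$ yields
\begin{equation*}
n\ge m+c(m-1)=(c+1)m-c\ge (c+1)\tfrac{n}{c}-c=n+\tfrac{n}{c}-c,
\end{equation*}
so $n\le c^2$, contradicting $n\ge c^2+1$. Thus in the non-convex case a red triangle with at most $c-1$ interior points already exists.

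It then remains to treat the case where $R$ is in convex position, which is where the induction enters. Here $v=m$, so $t=m-2$ and, crucially, $CH(R)$ has no red point in its interior. Let $Z=\bbI(CH(R))\cap(S\setminus R)$ be the set of interior points; it is exactly the union of the interiors of the red triangles, so the standing assumption forces $|Z|\ge ct=c(m-2)\ge c(c-1)$. Since $Z$ contains no red point, it is colored with at most $c-1$ colors, and one checks $c(c-1)\ge\max\{(c-1)^2+1,6\}$ for every $c\ge 3$; hence $|Z|\ge M_3(c-1,c-2)$ by the inductive hypothesis. This produces a monochromatic triangle $\Delta'$ spanned by $Z$ with at most $c-2$ interior points of $Z$. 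Because $\Delta'\subseteq CH(R)$ and $CH(R)$ meets $S$ only in $R\cup Z$ with no red point in its interior, every point of $S$ inside $\Delta'$ lies in $Z$; so $\Delta'$ has at most $c-2\le c-1$ interior points in all of $S$, the desired contradiction.

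The main obstacle, and the reason a single counting argument does not suffice, is precisely the near-balanced convex case: when all classes have size close to $n/c$ and $R$ is in convex position, the inequality $n-m\ge c(m-2)$ is essentially tight (for $n=c^2+1$, $m=c+1$ it reads $c^2-c\ge c^2-c$), so pigeonholing the non-red points among the red triangles gives triangles with exactly $c$ interior points and no direct contradiction. Passing to the interior set $Z$ and recursing on $c-1$ colors is what closes this gap, and the slack $c(c-1)-[(c-1)^2+1]=c-2\ge 0$ is exactly what makes the recursion legal. One minor technical point to record is that when $Z$ uses fewer than $c-1$ colors I still wish to invoke $M_3(c-1,c-2)$; this is justified by the elementary monotonicity of $M_3$ in the number of colors (refine any coloring of $Z$ to exactly $c-1$ nonempty classes, possible since $|Z|\ge c-1$, noting that a monochromatic triangle for the refinement is a fortiori monochromatic for the original coloring).
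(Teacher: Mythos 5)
Your proof is correct and follows essentially the same route as the paper's: induction on $c$ with Proposition \ref{lm1} as the base case, pigeonholing a color class of size at least $c+1$, counting interior points over a triangulation of that class, and, when the class is in convex position, recursing on the at most $(c-1)$-colored interior of its hull via $M_3(c-1,c-2)$. The only differences are organizational --- you split on convex versus non-convex position rather than on the size of the largest class, and you explicitly record the monotonicity of $M_3$ in the number of colors, a point the paper leaves implicit.
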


\begin{proof} Let the $c$ colors be indexed by $\{1, 2, \ldots, c\}$ and $S$ be a any set of  $c^2+1$  points in the plane colored by $c$ colors. The result will be proved by induction on $c$. The base case $c=2$ is true as $M_3(2,1)=6$ by Proposition \ref{lm1}.

Now, assume that $c\geq 3$ and $M_3(c-1, c-2)\leq \max\{(c-1)^2+1, 6\}$. Let $S_1 \subseteq S$ be the set of points with color 1. As $|S|=c^2+1$, by the pigeon-hole principle, it can be assumed that $|S_1|\geq c+1$.  

If $|S_1|\geq c+2$, by Fact \ref{fact2} any triangulation of $S_1$ has at least $c$ triangles. There exists a monochromatic triangle of color 1 with at most $c-1$ interior points unless each of the triangles in any triangulation of $S_1$ contains at least $c$ interior points. This implies that $|S|\geq c^2+|S_1|\geq c^2+c+2$, which is impossible. 

Therefore, $|S_1|=c+1$. In this case any triangulation of $S_1$ has at least $c-1$ triangles. If any triangulation of $S_1$ has $c$ triangles, then by similar arguments as before there exists a monochromatic triangle with at most $c-1$ interior points. This implies that any triangulation of $S_1$ contains exactly $c-1$ triangles, that is, $|\bbV(CH(S_1))|=c+1$. Let $S_{-1}=(S\backslash S_1)\cap CH(S_1)$. Note that if $|S_{-1}| \leq  c(c-1)-1$, then there exists a monochromatic triangle of color 1, with at most $c-1$ interior points. Hence, it suffices to assume that $|S_{-1}|\geq c(c-1)\geq M_3(c-1, c-2)$. As $|S_{-1}|$ is a set of points colored with $c-1$ colors, by induction hypothesis there exists monochromatic triangle in $|S_{-1}|$ with at most $c-2$ interior points.  
\end{proof}

\subsubsection{Completing the Proof of the Upper Bound} 
For $c=2$, it is known that $M_3(2, 0)=9$ \cite{disjoint_monochromatic_triangles}, which implies that $\lambda_3(2)=0$. For $c\geq 3$, it will be shown by induction on $c$ that $M_3(c, c-2)< c^2 + c + 1$. The base case $c=3$ is true as $M_3(3,1) = 13$ by Theorem \ref{th:31}. 

Let $c\geq 4$ and $S$ be any set of  $c^2+c+1$  points in the plane colored by $c$ colors, and assume that the theorem is true for $c-1$, that is, $M_3(c-1,c-3) \leq c^2 - c + 1$. Let $S_1 \subseteq S$ be the set of points with color 1. As $|S|=c^2+c+1$, by the pigeon-hole principle, it can be assumed that $|S_1|\geq c+2$.  

If $|S_1|\geq c+3$, by Fact \ref{fact2} any triangulation of $S_1$ has at least $c+1$ triangles. There exists a monochromatic triangle with at most $c-2$ interior points unless each of the triangles in any triangulation of $S_1$ contains at least $c-1$ interior points. This implies that $|S|\geq (c-1)(c+1)+|S_1|\geq c^2+c+2$, which is impossible. 

Therefore, $|S_1|=c+2$. If $|\bbV(CH(S_1))|\leq c$, then any triangulation of $S_1$ has at least $c+2$ triangles. By similar arguments as before, there exists a monochromatic triangle with at most $c-2$ interior points. Therefore, it suffices to assume that $c+1\leq |\bbV(CH(S_1))|\leq c+2$. These two cases are considered separately as follows:

\begin{description}

\item[$|\bbV(CH(S_1))|=c+2$]  Any triangulation of $S_1$ contains $c$ triangles. Let $S_{-1}=(S\backslash S_1)\cap CH(S_1)$. Note that if $|S_{-1}|<c(c-1)$, then there exists a monochromatic triangle of color 1, with at most $c-2$ interior points. Hence, it suffices to assume that $|S_{-1}|\geq c(c-1)$. As $S_{-1}$ is a set of $c(c-1)$ points colored with $c-1$ colors, and $M_3(c-1, c-2)\leq (c-1)^2+1$ by Lemma \ref{lm:2} there exists a monochromatic triangle in $S_{-1}$ with at most $c-2$ interior points.

\item[$|\bbV(CH(S_1))|=c+1$] Any triangulation of $S_1$ contains $c+1$ triangles by Fact \ref{fact1}. As before, let $S_{-1}=(S\backslash S_1)\cap CH(S_1)$. Note that if $|S_{-1}|<(c+1)(c-1)$, then there exists a monochromatic triangle of color 1, with at most $c-2$ interior points. Hence, it suffices to assume that $|S_{-1}|= c^2-1$. As $S_{-1}$ has $c^2 -1$ points colored with $c-1$ colors, and by induction hypothesis $M_3(c-1, c-3)\leq c^2-c+1$, there exists a monochromatic triangle $\Delta$ in $S_{-1}$ with at most $c-3$ interior points of $S_{-1}$. As the convex hull of $S_1$ contains only one point of color 1, $\Delta$ can contain at most one point of color 1. This implies that $\Delta$ is monochromatic triangle with at most $c-2$ interior points. 

\end{description}

\subsection{ Proof of Lower Bound} 

In this section we prove the lower bound on $\lambda_3(c)$ by appropriately coloring a Horton set with $c$ colors. A {\it Horton set} is a set $H$ of $n$ points sorted by $x$-coordinates: $h_1 <_x h_2 <_x h_3 <_x \ldots <_x h_n$, such that the set
$H^+:= \{h_2,h_4, \ldots\}$ of the even points and the set $H^- := \{h_1,h_3, \ldots\}$ of the odd points are Horton sets and any line through two even points (the {\it upper set}) leaves all odd points below and any line through two odd points (the {\it lower set}) leaves all even points above. A Horton set of size $n$ can be recursively obtained by adding a large vertical separation after intertwining in the $x$-direction an upper Horton set $H^+$ of size $\lfloor n/2 \rfloor $ and a lower set $H^-$  of size $\lceil n /2\rceil$ \cite{horton}.

%Given a Horton set $S$, we define a $2$-{\it cup} (respectively $2$-{\it cap}) as a subset with $2$ points if slope of the line joining the $2$ points is positive (or respectively negative). We call a $2$-cup{ \it $q$-friendly} if at most $q$ points of $S$ lie above the $2$-cup, that is, at most $q$ points lie in the region between two vertical lines drawn upwards from $2$ points of the $2$-cup. Similarly we define a $2$-cap as {\it  $q$-friendly} when at most $q$ points of $S$ lie below this $2$-cap.

With these definitions the following observation can be proved easily.

\begin{prop}
Let $H=\{h_1, h_2, \ldots, h_n\}$ be a Horton set, sorted in the increasing order of the $x$-coordinates. For any line joining $h_{i}$ and $h_{j}$, define
$ H_{ij} $ as all points of $H$ with $x$-coordinates between that of $h_{i}$ and $h_{j}$. Also we define $H_{ij}^+:=H_{ij} \cap H^+$ and $H_{ij}^-:=H_{ij} \cap H^-$. If $i \in H^+$ and $j \in H^-$ (or vice versa), then the line joining $h_{i} \in H^+$ and $h_{j}$ has all points of $H^+_{ij}$ above it and all points of $H^-_{ij}$ below it.
\label{lm6}
\end{prop}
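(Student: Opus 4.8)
The plan is to deduce everything from the two defining ``slab'' properties of a Horton set: every line spanned by two points of the upper set $H^+$ leaves all of $H^-$ strictly below it, and every line spanned by two points of the lower set $H^-$ leaves all of $H^+$ strictly above it. Write $e$ for the endpoint of $\ell$ lying in $H^+$ and $o$ for the endpoint lying in $H^-$, so that $\{e,o\}=\{h_i,h_j\}$ (by hypothesis $\ell$ joins one point of each set, which also disposes of the ``vice versa'' case). The only further hypothesis we use is that a tested point lies in $H_{ij}$, i.e.\ strictly between $e$ and $o$ in the $x$-order; hence among the three points $e$, $o$ and the tested point, the tested point is always the \emph{middle} one and $e,o$ are the two outer ones, in one of the two possible left--right orders.

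The computational backbone is an elementary orientation fact. For three points $L,M,R$ with $x_L<x_M<x_R$, the statements ``$M$ lies above line $LR$'', ``$R$ lies below line $LM$'' and ``$L$ lies below line $MR$'' are pairwise equivalent, since after clearing the positive denominators coming from the $x$-gaps each reduces to the single inequality $\det(M-L,\,R-L)<0$; their negations are therefore equivalent as well. The essential point is that the positivity of those denominators is precisely the statement that $M$ is the middle point, so it is the betweenness hypothesis that makes these conditions coincide rather than negate one another; for a point outside the slab between $e$ and $o$ the conclusion would reverse.

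With these in hand the argument is a short check, identical up to interchanging the two outer points according to whether $x_e<x_o$ or $x_o<x_e$. For an upper point $p\in H_{ij}^+$, apply the first property to the line through the two upper points $p$ and $e$; this places $o$ below that line, which by the orientation fact --- reading $p$ as the middle point and $o$ as an outer point --- forces $p$ to lie above $\ell$. For a lower point $q\in H_{ij}^-$, apply the second property to the line through the two lower points $q$ and $o$; this places $e$ above that line, which by (the negated form of) the orientation fact forces $q$ to lie below $\ell$. This yields both halves of the statement.

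I do not anticipate a genuine obstacle: all of the content is carried by the two slab properties, and the only delicate part is the sign bookkeeping in the orientation fact together with the check that the betweenness hypothesis is invoked in the correct direction, so that one uses the equivalences rather than their negations. General position ensures that every inequality is strict, so ``above'' and ``below'' are unambiguous throughout.
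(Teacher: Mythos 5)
Your proof is correct and follows essentially the same route as the paper's: both arguments rest on the two slab properties of the Horton set combined with the observation that, for a point of $H_{ij}$ lying strictly between $h_i$ and $h_j$ in $x$-order, being above the chord $h_ih_j$ is equivalent to one endpoint lying below the line through the other endpoint and that middle point. The paper phrases this as a one-line contradiction and leaves the orientation fact implicit, whereas you argue directly and verify the sign bookkeeping explicitly; the mathematical content is identical.
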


\begin{proof}
Let $h_k \in H_{ij}$ be a point which is below the line joining $h_{i} $ and $h_{j}$. If $h_k \in H^+_{ij}$, then the line through $h_{i} \in H^+$ and $h_k \in H^+$ leaves $h_{j} \in H^-$ above, which is not the case for the Horton set since any $2$ points of $H^+$, when joined by a line must have all points of $H^-$ below it. Hence, $h_k \in H^-_{ij}$. 

Similarly, it can be shown that all points which are above the line joining $h_{i} $ and $h_{j}$ are in $H^+_{ij}$.
\end{proof}

%%%%%%%%%%%%%%%%%%%%%%%%%%%%%%%%%%%%%%%%%%%%%%%%%%%%%%%%%%%%%%%%%%%%%%%%%%%%%%%%

\begin{figure}[!hbt]
\centering
%\begin{minipage}[c]{1.0\textwidth}
\centering
 \includegraphics[width=0.48\columnwidth]
 {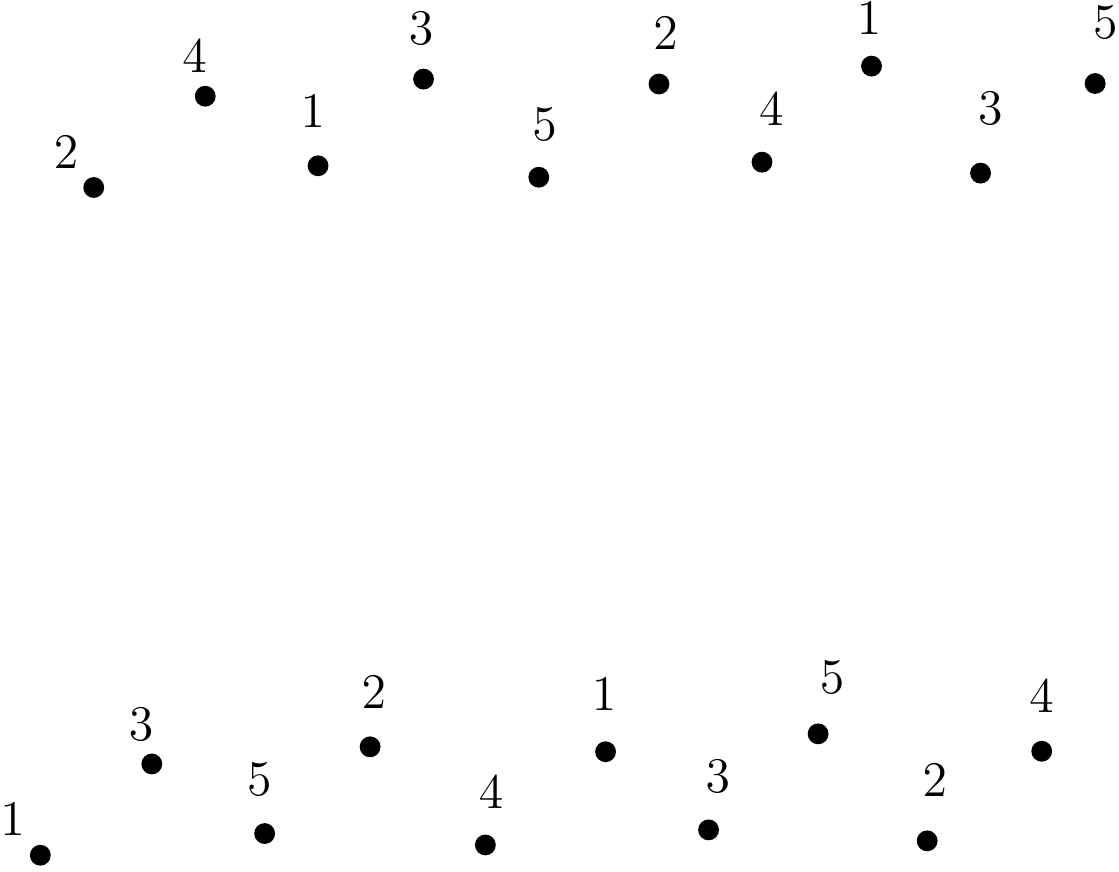}\\
%{\small (a)}\\
%\end{minipage}%
%\hspace{-0.25in}
\caption{A Horton set colored with 5 colors. The number next to a point denotes the color of the point. Observe that the coloring splits in a cyclical manner in $H^+$ and $H^-$.}
\label{fig2}
\end{figure}
%%%%%%%%%%%%%%%%%%%%%%%%%%%%%%%%%%%%%%%%%%%%%%%%%%%%%%%%%%%%%%%%%%%%%%%%%%%%%%%%%%%%%%%%%%%%%%%%%%%%%%%%%%

Using the above lemma we now complete the proof of the lower bound. Let $c = 2q + 1$ be an odd number. The $c$ colors are indexed by $\{1, 2, \ldots, c\}$. Consider a Horton set $H$ of size $n$ and arrange the points $h_1,h_2, \ldots h_{n}$ of $H$ in increasing order of $x$-coordinates. Color $H$ with these $c$ colors as follows:

The points $h_1, h_{2q+2}, h_{4q+3} \ldots $ are colored by color 1. $h_2, h_{2q+3}, h_{4q+4} \ldots$ are colored by color 2, and in general, the points $h_i, h_{2q+i+1}, h_{4q+i+2}, \ldots$ by color $i$, for $i \in \{1, 2, \ldots, c\}$. As $c=2q+1$ is odd, this coloring splits in a similar pattern in $H^+$ and $H^-$, that is, $H^-$ is colored in cyclical manner as $1,3,5,\ldots ,2q+1,2,4, \ldots 2q, \ldots$ and so on, and $H^+$ is colored in cyclical manner as $2,4, \ldots 2q,1,3,5,\ldots,2q+1, \ldots$ and so on. The coloring scheme with $5$ colors is illustrated in Figure \ref{fig2}.

If $c=2q+2$ is an even number, and a Horton set $H$ of size $n$ is given, then color the rightmost point of $H$ with color $2q+2$. The remaining $n-1$ points are colored with $c-1$ colors as before. 

%Let the Horton set colored in this way be called {\it uniform $2q+1$ colored} Horton sets. 

Hereafter, we assume that $c$ is odd. The proof for the case $c$ is even can be done similarly. Let $\Delta=\{h_a,h_b,h_c\}$ be a monochromatic triangle in a Horton set $H$, colored with $2q+1$ colors as above. From the inductive definition of a Horton set, w.l.o.g. it suffices to assume $h_b, h_c\in H^-$ and $h_a \in H^+$.  The coloring is done in such a way that the points in $H^+$ have all the $2q+1$ colors. Let $H_{bc},H_{bc}^-$ and $H_{bc}^+$ be defined as before. 

By our coloring scheme and using the fact that $h_b, h_c\in H^-$, we get $|H_{bc}|=4\gamma q+2\gamma-1$ and $|H_{bc}^-|=2\gamma q+\gamma-1$ for some $\gamma\in \{1, 2, \ldots, \}$. By recursively breaking $H^-$ into smaller Horton sets, after some steps $h_b$ and $h_c$ must belong to the upper set and lower set, respectively, of some subset of $H^-$ (say $H_0$) which is itself another Horton set. %This happens because breaking down the Horton set only halves the difference of index values if those $2$ elements are both in upper or lower Horton set. Also,when they separate, the difference of their indexes will still be a multiple of $2q+1$ for the same reason (albeit an odd multiple). 
So by Proposition \ref{lm6} if $h_b \in {H_0}^+ $ and $h_c \in {H_0}^-$ the line $h_b h_c$ will have at least $q$ points of ${H_0}^+ \subset H^-$ above it. Moreover, these $q$ points, since they are in $H^-$, must either lie beneath the line $h_a h_b$ or beneath the line $h_a h_c$. Therefore, the monochromatic triangle $h_a h_b h_c$ must have at least $q$ interior points. This proves that any monochromatic triangle in $H$ must have at least $q$ interior points.

Therefore, we have shown that it is possible to construct arbitrarily large sets of points which when colored with $c$-colors, has no monochromatic triangle with less than $q$ interior points. This implies that $M_3(2q+2, q-1)=M_3(2q+1, q-1)= \infty$ and hence $\lambda_3(c) \geq \lfloor \frac{c-1}{2} \rfloor $. This completes the proof of Theorem \ref{th:lambda}.

\section{Proof of Theorem \ref{th:41_restriction}}
\label{sec:41}

%\begin{defn}
%$\lambda_3^H(c)$ denotes the smallest integer such that for any $c$ coloring of a Horton set $H$, there exists a monochromatic triangle with at most $\lambda_3^H(c)$ stains. 
%
%If we denote $M_3^H(c,s)$ as the minimum number of points required in a Horton set such that any $c$ coloring will have a monochromatic triangle with at most $s$ stains, then $\lambda_3^H(c)$ is the least integer such that $M_3^H(c,\lambda_3^H(c)) < \infty$.
%\end{defn}

In this section we prove that any sufficiently large Horton set colored arbitrarily with 4 colors contains a monochromatic triangle with at most 1 interior point. To this end, let $H=\{h_1, h_2, \ldots, h_{26}\}$ be a $4$ colored Horton set of size $26$. If all the 4 colors are not present in $H^+$ (respectively $H^-$), then by Theorem \ref{th:31}  there exists a monochromatic triangle with at most 1 interior point in $H^+$ (respectively $H^-$). Therefore, assume that all the 4-colors are present in both $H^+$ and $H^-$. W.l.o.g. let $h_a<_x h_b$ be two points in $H^-$ of same color, such that the colors of all points of $H^-$ in between $h_a$ and $h_b$ are different from that color. We can choose that color such that $b-a \in \{2,4,6\}$. 

\begin{description}
\item[$b-a=2$] In this case the points $h_a$ and $h_b$ together with any point $h_c\in H^+$ of the same color (say color 1) is an empty monochromatic triangle because, from the definition of a Horton set, no point in $H^+$ or $H^-$ can be inside the triangle $h_a h_b h_c$.

\item[$b-a=4$] By Proposition \ref{lm6} there is at most 1 point of $H^-$ above the line joining $h_a$ and $h_b$. Therefore, triangle $h_a h_b h_c$, where $h_c \in H^+$ has color 1, has at most 1 interior point.

\item[$b-a=6$] W.l.o.g assume that $h_a \in (H^-)^-, h_b \in (H^-)^+$.  Let the $2$ points in $H^-$ between $h_a$ and $h_b$ be $h_u <_x h_v$. Naturally, $h_u \in (H^-)^+$ and $h_v \in (H^-)^-$. By Proposition \ref{lm6} the only point over the line joining $h_a$ and $h_b$ is $h_u$, and $h_a h_b h_c$, where $h_c \in H^+$ has color 1, has at most 1 interior point.

\end{description}

This completes the proof of Theorem \ref{th:41_restriction} and strengthens evidence for the following conjecture:  

\begin{conj}
Every sufficiently large set of points in the plane, in general position, colored arbitrarily with $4$ colors contains a monochromatic triangle with at most $1$ interior point.
\label{conjecture1}
\end{conj}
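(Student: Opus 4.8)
The plan is to prove the equivalent statement $M_3(4,1)<\infty$; recall that the matching lower bound $\lambda_3(4)\geq 1$ is already furnished by Theorem \ref{th:lambda}, so only a finite upper bound remains. The overall strategy is to run the triangulation/counting argument of Section \ref{sec:31} on the largest color class, quantify exactly when it fails, reduce to a single rigid ``saturated'' configuration, and then break that configuration using the full four-color structure rather than a black-box appeal to the three-color case.

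First I would set $S=S_1\sqcup S_2\sqcup S_3\sqcup S_4$ with $|S_1|\geq |S_2|\geq|S_3|\geq|S_4|$ and $|S|=N$ large. Triangulating $S_1$ and invoking Fact \ref{fact2} yields at least $|S_1|-2$ interior-disjoint triangles; since a non-$S_1$ point lies in the interior of at most one of them, a color-$1$ triangle with at most one interior point of $S$ exists unless every triangle carries at least two foreign points, which forces $2(|S_1|-2)\leq N-|S_1|$, i.e. $|S_1|\leq (N+4)/3$. Together with $|S_1|\geq N/4$ this is not yet a contradiction, and the same dichotomy applied to each color shows that the only surviving case is the \emph{saturated} one, in which every class has size $\Theta(N)$, lies almost entirely on its own convex hull, and has each of its triangles flooded by exactly two foreign points. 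The goal of the remaining steps is to show that such a configuration cannot persist for $N$ large.

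To break the saturated case I would replace the single triangulation by the full reservoir of almost-empty color-$1$ triangles. Viewing $S_1$ as an uncolored set, the classical lower bounds on the number of empty triangles \cite{barany_valtr_empty_convex_polygons,dumitrescu} show that $S_1$ spans $\Omega(|S_1|^2)$ triangles containing at most one point of $S_1$ in their interior. There are only $N-|S_1|=O(N)$ foreign points to distribute among these $\Omega(N^2)$ triangles, so on average a foreign point lies in few of them; extracting from this reservoir a triangle that is simultaneously almost empty of $S_1$ and almost empty of foreign points would complete the proof.

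\textbf{The main obstacle} is precisely this extraction. A naive averaging fails, because a single foreign point can lie inside $\Omega(N^2)$ of the almost-empty color-$1$ triangles, so small total occupancy does not descend to an individual triangle. Worse, one cannot fall back on the induction used for the upper bound $\lambda_3(c)\leq c-2$: that argument ultimately needs an \emph{empty} monochromatic triangle inside a residual three-colored set, and three-colored Horton sets \cite{horton,erdosszekereschromatic} avoid empty monochromatic triangles altogether. Hence any proof must exploit the four colors genuinely and locally. The route I would pursue is to restrict to a family of color-$1$ triangles with pairwise nearly-disjoint interiors --- for instance the triangles of a triangulation of $S_1$ together with a controlled set of edge-flips, or triangles on cyclically consecutive hull points --- so that each foreign point is charged to only $O(1)$ triangles; then the $O(N)$ foreign points are spread over $\Omega(N)$ triangles and one must be almost empty. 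The delicate point, and exactly where the Horton rigidity exploited in Theorem \ref{th:41_restriction} would have to be replaced by a general-position argument, is guaranteeing that this low-overlap family still contains enough triangles that are almost empty of color $1$; reconciling these two competing requirements is the crux on which the conjecture hinges.
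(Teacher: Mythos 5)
You should be aware at the outset that the statement you were asked to prove is Conjecture \ref{conjecture1} of the paper --- it is \emph{open}, and the paper itself offers no proof of it. What the paper does prove is the special case of Horton sets (Theorem \ref{th:41_restriction}): there the argument is entirely structural, choosing two same-colored points $h_a <_x h_b$ in $H^-$ with all intermediate points of $H^-$ differently colored, so that $b-a\in\{2,4,6\}$, and then using Proposition \ref{lm6} to show the triangle they span with a like-colored point of $H^+$ has at most one interior point. None of that rigidity is available in general position, so there is no ``paper proof'' for your proposal to match; the honest benchmark is whether your sketch closes the general case, and by your own admission it does not.

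The genuine gap is exactly where you locate it, but it is worth making the failure quantitative. The disjoint-triangle counting that powers the paper's upper bound $M_3(c,c-2)\leq c^2+c+1$ cannot reach one interior point for $c=4$: triangulating the largest class $S_1$ gives at least $|S_1|-2$ interior-disjoint triangles that are automatically empty of $S_1$, but with $|S_1|$ as small as $N/4$ there are roughly $3N/4$ foreign points spread over roughly $N/4$ triangles, an average occupancy near $3$, so averaging over a disjoint family can never force occupancy $\leq 1$. Your proposed fix --- passing to the $\Omega(|S_1|^2)$ empty triangles of $S_1$ --- destroys disjointness, and as you note a single foreign point can stab $\Omega(N^2)$ of them, so the averaging collapses again. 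The two requirements (a family large relative to the foreign-point count, and bounded stabbing number per foreign point) are in genuine tension, and no charging scheme in your sketch resolves it; moreover any resolution must use all four colors essentially, since the $3$-colored Horton colorings of Devillers et al.\ show that residual three-colored subsets can avoid empty monochromatic triangles entirely. So your proposal is a reasonable research program --- and its diagnosis of the obstruction is accurate and consistent with why the paper retreats to Horton sets --- but it is not a proof, and the extraction step you flag is the conjecture itself in disguise, not a technical loose end.
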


\section{Conclusions}

In this paper, we study the existence of monochromatic empty triangles with few interior points. We prove that any large enough $c$-colored point set contains a monochromatic triangle with at most $c-2$ interior points. Using a Horton set construction, we also show that for any $c\geq 2$, there exist arbitrarily large $c$-colored point sets in which every monochromatic triangle contains at least $\left\lfloor \frac{c-1}{2} \right\rfloor $ interior points.

This paper just scratches the surface of the almost empty monochromatic polygon problem. Several interesting questions are left open. Improving the bounds on $\lambda_3(c)$ and proving Conjecture \ref{conjecture1} are the main challenges. Generalizing these results to monochromatic convex $r$-gons would also be interesting.\\

\small{\noindent{\it Acknowledgement}: The authors are indebted to the anonymous referees for carefully reading the manuscript and providing many valuable comments which improved the presentation of the paper.}


\begin{thebibliography}{99}
\small
\bibitem{aichholzerI}
O. Aichholzer, R. Fabila-Monroy, H. Gonz\'alez-Aguilar, T. Hackl, M. A. Heredia, C. Huemer, J. Urrutia, P. Valtr, B. Vogtenhuber, On $k$-gons and $k$-holes in point sets, {\it Computational Geometry: Theory and Applications}, Vol. 48 (7),  528--537, 2015.
\bibitem{aichholzerII}
O. Aichholzer, R. Fabila-Monroy, T. Hackl, C. Huemer, A. Pilz, and B. Vogtenhuber. Lower bounds for the number of small convex $k$-holes, {\it Computational Geometry: Theory and Applications}, Vol. 47 (5), 605--613, 2014.
\bibitem{aichholzer_siam_dm}
O. Aichholzer, T. Hackl, C. Huemer, F. Hurtado, B. Vogtenhuber, Large bichromatic point sets admit empty monochromatic 4-gons, {\it SIAM Journal on Discrete Mathematics}, Vol. 23 (4), 2147--2155, 2010.
%\bibitem{aichholzer_survey}
%O. Aichholzer, [Empty] [colored] $k$-gons - Recent results on some Erd\H os-Szekeres type problems, {\it Proc. XIII Encuentros de Geometria % Computacional}, 43-52, Spain, 2009.
\bibitem{aiccholzer_parity_wads}
O. Aichholzer, T. Hackl, M. Hoffmann, A. Pilz, G. Rote, B. Speckmann, B. Vogtenhuber, Plane
Graphs with Parity Constraints, {\it Proc. Algorithms And Data Structures Symposium (WADS)},
LNCS 5664, 13--24, 2009.
\bibitem{aichholzer_triangle_number}
O. Aichholzer, R. Fabila-Monroy, D. Flores-Pe\~naloza, T. Hackl, C. Huemer, J. Urrutia, Empty monochromatic triangles, {\it Computational Geometry: Theory and Applications}, Vol. 42, 934--938, 2009.
%\bibitem{toth}
%O. Aichholzer, C. Huemer, S. Kappes, B. Speckmann, C. D. T\'oth, Decompositions, partitions, and coverings
%with convex polygons and pseudo-triangles, {\it Graphs and Combinatorics}, Vol. 23, 481-507, 2007.
\bibitem{barany_valtr_empty_convex_polygons}
I. B\'ar\'any, P. Valtr, Planar point sets with a small number of empty convex polygons, {\it  Studia Scientiarum Mathematicarum Hungarica}, Vol. 41, 243--269, 2004.
%\bibitem{problemsandresults}
%I. B\'ar\'any, G. K\'arolyi, Problems and results around the Erd\H os-Szekeres convex polygon theorem, {\it JCDCG}, LNCS 2098, 91--105, 2000.
%\bibitem{bbbsdmjcnt}
%B. B. Bhattacharya, S. Das, Holes or empty pseudo-triangles in planar point sets , {\it Moscow Journal of Combinatorics and Numbr Theory}, to appear, 2012.
%\bibitem{dumitrescu_pach}
%A. Dumitrescu, J. Pach, Partitioning colored point sets into monochromatic parts, {\it International Journal of Computational Geometry and Applicaltions}, Vol. 12, 401--412, 2002.
\bibitem{dumitrescu}
A. Dumitrescu, Planar sets with few empty convex polygons, {\it Studia Scientiarum Mathematicarum Hungarica}, Vol. 36 93--109, 2000.
\bibitem{erdosszekereschromatic}
O. Devillers, F. Hurtado, G. K\'arolyi, C. Seara, Chromatic variants of the Erd\H os-Szekeres theorem on points in convex position, {\it Computational Geometry: Theory and Applications}, Vol. 26, 193--208, 2003.
\bibitem{erdosempty}
P. Erd\H os, Some more problems on elementary geometry, {\it Australian Mathematical Society Gazette}, Vol. 5, 52--54, 1978.
\bibitem{erdos}
P. Erd\H os, G. Szekeres, A combinatorial problem in geometry, {\it Compositio Mathematica}, Vol. 2, 463--470, 1935.
\bibitem{erdosz}
P. Erd\H os, G. Szekeres, On some extremum problems in elementary geometry, {\it Ann. Univ. Sci. Budapest}, E\"otv\"os, Sect. Math. 3/4, 53--62, 1960--61.
\bibitem{gerken}
T.Gerken, Empty convex hexagons in planar point sets, {\it Discrete and Computational Geometry}, Vol. 39, 239-272, 2008.
\bibitem{disjoint_monochromatic_triangles}
C. Grima, C. Hernando, C. Huemer, F. Hurtado, On some partitioning problems for two-colored point sets, {\it Proc. XIII Encuentros de Geometr\'ia Computacional Geometry}, Zaragoza, Spain, 2009.
\bibitem{harborth}
H. Harborth, Konvexe Funfecke in ebenen Punktmengen, {\it Elemente der Mathematik},
Vol. 33(5), 116-118, 1978.
\bibitem{horton}
J.D. Horton, Sets with no empty convex 7-gons, {\it Canadian Mathematical Bulletin}, Vol. 26, 482--484, 1983.
%\bibitem{urabecgta}
%K. Hosono, M. Urabe, On the number of disjoint convex quadrilaterals for a planar point set, {\it Computational %Geometry: Theory and Applications}, Vol. 20, 97 - 104, 2001.
\bibitem{huemer_geombinatorics}
C. Huemer, C. Seara, 36 two-colored points with no empty monochromatic convex fourgons, {\it Geombinatorics},
Vol XIX (1), 5--6, 2009.
\bibitem{kalb}
J. D. Kalbfleisch, J. G. Kalbfleisch, R. G. Stanton, A combinatorial problem on convex regions, {\it Proc. Louisiana Conf. Combinatorics, Graph Theory and Computing, Louisiana State Univ., Baton Rouge, La., Congr. Numer.}, Vol. 1, 180-188, 1970.
%\bibitem{1convexperiodica}
%G. K\'arolyi, G. Lippner, P. Valtr, Empty convex polygons in almost convex sets, {\it Periodica Math.
%Hungar.}, Vol, 55(2), 121--127, 2007.
%\bibitem{1convexpach}
%G. K\'arolyi, J. Pach, G. T\'oth, A modular version of the Erd\H os--Szekeres theorem, {\it Studia Scientiarum Mathematicarum Hungarica}, Vol. 38, 245--259, 2001.
\bibitem{koshelev_computer_solution}
V. A. Koshelev, Computer solution of the almost empty hexagon problem, {\it Mathematical Notes}, Vol. 89  (3-4), 455--458, 2011.
\bibitem{koshelev_almost_empty_hexagon}
V. A. Koshelev, Almost empty hexagons, {\it Journal of Mathematical Sciences}, Vol. 164 (1), 60--81, 2010.
\bibitem{koshelev_russian}
V. A. Koshelev, On Erd\H os-Szekeres problem for empty hexagons in the plane, {\it Model. Anal. Inform. Sist}, Vol. 16 (2), 22--74, 2009.
\bibitem{koshelev}
V. A. Koshelev, On the Erd\H os-Szekeres problem, {\it Doklady Mathematics}, Vol. 76, 603--605, 2007.
%\bibitem{kunkconvex}
%G. Kun, G. Lippner, Large convex empty polygons in $k$-convex sets, {\it Period. Math. Hungar.} Vol. 46,
%81-88, 2003.
\bibitem{nicolas}
C. M. Nicol\'as, The empty hexagon theorem, {\it Discrete and Computational Geometry}, Vol. 38, 389-397, 2007.
%\bibitem{overmarsdcg}
%M. Overmars, Finding sets of points without empty convex 6-gons, {\it Discrete and Computational Geometry}, Vol.29, %153--158, 2003.
%\bibitem{survey}
%W. Morris, V. Soltan, The Erd\H os-Szekeres problem on points in convex position- A survey, {\it Bulletin of the Amercian Mathematical Society}, Vol. 37(4), %437 - 458, 2000.
%\bibitem{matousek}
%J. Matou\v sek, {\it Lectures on Discrete Geometry}, Springer, 2002.
\bibitem{nyklova}
H. Nyklov\'a, Almost empty polygons, {\it Studia Scientiarum Mathematicarum Hungarica}, Vol. 40 (3), 269--286, 2003.
\bibitem{pach_non_convex}
J. Pach, On simplices embracing a point (invited talk), {\it Topological and Geometric Graph Theory (TGGT)}, Paris, France, 2008.
\bibitem{pach_toth_monochromatic_empty_triangles}
J. Pach, G. T\'oth, Monochromatic empty triangles in two-colored point sets, {\it Discrete Applied Mathematics}, Vol. 161 (9), 1259--1261, 2013.
\bibitem{szekeres}
G. Szekeres, L. Peters, Computer solution to the 17-point Erd\H os-Szekeres problem, {\it ANZIAM Journal}, Vol. 48, 151-164, 2006.
\bibitem{tothvaltr}
G. T\'oth, P. Valtr, The Erd\H os-Szekeres theorem: upper bounds and related results, {\it in J. E. Goodman,
J. Pach, and E. Welzl, Combinatorial and Computational Geometry}, MSRI Publications 52,
557--568, 2005.
%\bibitem{kconvexvaltr}
%P. Valtr, A suffcient condition for the existence of large empty convex polygons, {\it Discrete and Compututational %Geometry}, Vol. 28 671-682, 2002.
\bibitem{valtr_count}
P. Valtr, On empty pentagons and hexagons in planar point sets, {\it arXiv:1111.5656v1}, 2011.
\bibitem{valtrhexagon}
P. Valtr, On empty hexagons, {\it in J. E. Goodman, J. Pach, R. Pollack, Surveys on Discrete and Computational Geometry, Twenty Years Later}, AMS, 433-441, 2008.
\end{thebibliography}
\end{document}